\newtheorem{thm}{Theorem}[section]
\newtheorem*{thm*}{Theorem}
\newtheorem{lemma}[thm]{Lemma}
\newtheorem{corollary}[thm]{Corollary}
\newtheorem{ques}[thm]{Question}
\theoremstyle{remark}
\newcommand{\id}{\mathrm{id}}
\newcommand{\N}{\mathbb{N}}
\renewcommand{\P}{\mathbb{P}}
\newcommand{\C}{\mathbb{C}}
\newcommand{\Z}{\mathbf{Z}}
\newcommand{\veps}{\varepsilon}
\newcommand{\aveN}{\frac{1}{N}\sum_{n=1}^N}
\newcommand{\avepN}{\frac{1}{\pi(N)}\sum_{p\in\P,p\leq N}}
\newcommand{\avepM}{\frac{1}{\pi(M)}\sum_{p\in\P,p\leq M}}
\renewcommand{\hat}[1]{\widehat{#1}}
\newcommand{\aveWN}{\frac{1}{WN}\sum_{n=1}^{WN}}
\newcommand{\aveWM}{\frac{1}{WM}\sum_{n=1}^{WM}}
\newcommand{\avek}{\frac{1}{k}\sum_{n=1}^k}
\title{
Nilsystems and ergodic averages along primes}
\author{Tanja Eisner}
\address{Institute of Mathematics, University of Leipzig,
P.O. Box 100 920, 04009 Leipzig, Germany}
\email{eisner@math.uni-leipzig.de}
\keywords{Ergodic averages along primes, nilsystems, everywhere convergence}
\begin{document}

\maketitle

\centerline{\emph{Dedicated to Vitaly Bergelson on the occasion of his  65$^\text{th}$ birthday}}

\begin{abstract}
 A celebrated result by Bourgain and Wierdl states that ergodic averages along primes converge almost everywhere for $L^p$-functions,  $p>1$, with a polynomial version by Wierdl and  Nair. Using an anti-correlation result for the von Mangoldt function due to Green and Tao we observe everywhere convergence of such averages for nilsystems and continuous functions.
\end{abstract}

\section{Introduction}

Ergodic theorems, originally motivated by physics, have found applications in and connections to many areas of mathematics. A prominent example is the 
result on almost everywhere convergence of ergodic averages along primes by Bourgain \cite{B86,B} (for $p>\frac{1+\sqrt{3}}{2}$) and subsequently Wierdl \cite{W} (for all $p>1$).

\begin{thm}\label{thm:bourgain}
Let $(X,\mu,T)$ be a measure-preserving system, $p>1$ and $f\in L^p(X,\mu)$. Then the ergodic averages along primes 
\begin{equation}\label{eq:ave-primes}
\avepN T^pf
\end{equation}
converge almost everywhere.
\end{thm}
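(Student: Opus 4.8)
The plan is to run the Bourgain--Wierdl argument, whose three ingredients are transference, a reduction to the von Mangoldt weight, and a Fourier-analytic maximal inequality obtained via the circle method. By Calder\'on's transference principle it is enough to treat the shift $T:\,x\mapsto x+1$ on $\bbZ$: one must prove the maximal inequality
\begin{equation*}
\Bigl\|\sup_{N\ge1}\Bigl|\avepN f(\bcdot+p)\Bigr|\Bigr\|_{\ell^p(\bbZ)}\;\lesssim_p\;\|f\|_{\ell^p(\bbZ)}\qquad(p>1)
\end{equation*}
and establish a.e.\ convergence on a dense subclass. By partial summation the operator $\avepN f(\bcdot+p)$ differs from $\frac1N\sum_{n\le N}\Lambda(n)f(\bcdot+n)$, with $\Lambda$ the von Mangoldt function, by an error whose maximal function is dominated by the Hardy--Littlewood maximal operator, so one may pass to the von Mangoldt average, whose Fourier multiplier is the exponential sum $\Lambda_N(\theta)=\frac1N\sum_{n\le N}\Lambda(n)\rme^{2\pi\rmi n\theta}$. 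For the dense class one takes, on $\bbZ$, the finitely supported functions (whose averages tend to $0$), while in a general ergodic system the spectral theorem reduces matters to convergence of $\avepN\rme^{2\pi\rmi p\theta}$, which holds for every $\theta$ by Vinogradov's equidistribution theorem for primes.

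Next comes the circle method. Fix $N$ and take a small power of $\log N$ as cutoff; split $\bbT$ into major arcs $\mathfrak{M}_{a/q}$ around fractions $a/q$ with $q$ small and the complementary minor arcs $\mathfrak{m}$. On $\mathfrak{m}$, Vinogradov's bound gives $|\Lambda_N(\theta)|\lesssim N^{-\delta}$ for some $\delta>0$, so the minor-arc piece of each average has small $\ell^2\to\ell^2$ norm; summing these norms over the dyadic scales $N=2^k$ controls the associated maximal function in $\ell^2$, hence in $\ell^p$ by interpolation with the trivial $\ell^\infty$ bound. On $\mathfrak{M}_{a/q}$ one has $\Lambda_N(\theta)=\frac{\mu(q)}{\varphi(q)}$ times a Fej\'er-type kernel centred at $a/q$, up to a negligible error, so the major-arc multiplier factors approximately as the arithmetic object $\sum_{q}\frac{\mu(q)}{\varphi(q)}\sum_{(a,q)=1}\delta_{a/q}$ convolved with smooth continuous averages.

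For the major-arc maximal inequality one invokes Bourgain's machinery: an $\ell^2$ bound via a square function built from successive dyadic differences, shifted maximal inequalities to absorb the translates by the $a/q$, and a logarithmic (entropy) summation over dyadic blocks; interpolating with the $\ell^\infty$ bound gives the inequality for $p$ near $2$, and Wierdl's refinement --- Rubio de Francia--type square function estimates adapted to the arithmetic pieces $\sum_{(a,q)=1}\delta_{a/q}$ --- pushes it down to every $p>1$. Combined with the dense-class convergence above and transference, this yields a.e.\ convergence of \eqref{eq:ave-primes} for all $f\in L^p$, $p>1$.

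The crux is the major-arc maximal inequality: one must bound the supremum over \emph{all} truncations $N$, and the superposition of the translated Fej\'er kernels over all small denominators only becomes summable once the cancellation in $\mu(q)/\varphi(q)$ and in the associated Gauss and Ramanujan sums is exploited through square functions and a carefully organized entropy argument; extracting the full range $p>1$, rather than just $p$ close to $2$, is exactly the step that required Wierdl's restriction-theoretic input beyond Bourgain's original range $p>\tfrac{1+\sqrt3}{2}$.
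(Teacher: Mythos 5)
The paper does not actually prove Theorem \ref{thm:bourgain}: it is quoted from Bourgain and Wierdl with only a one-line indication of the method (transference, circle method, prime exponential sums), so your sketch can only be measured against the original arguments. Your architecture matches theirs: Calder\'on transference to the shift on $\bbZ$, replacement of the prime average by the von Mangoldt--weighted average, a major/minor arc decomposition of the multiplier, Bourgain's square-function and entropy machinery on the major arcs, and Wierdl's additional input to reach all $p>1$. Two points, however, are genuinely wrong as written, and the first is a real gap rather than a technicality.

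The gap is in how you deduce almost everywhere convergence. Calder\'on transference transports \emph{inequalities} (maximal, oscillation, variational) from $(\bbZ,\text{shift})$ to an arbitrary system; it does not transport a.e.\ convergence itself. So ``maximal inequality $+$ convergence for finitely supported functions on $\bbZ$'' gives convergence on $\bbZ$ only, and your proposed dense class in a general system --- eigenfunctions via the spectral theorem, for which $\avepN \rme^{2\pi\rmi p\theta}$ converges by Vinogradov --- spans only the Kronecker factor of $L^2$, not a dense subspace. (For Birkhoff averages one completes the dense class with coboundaries $g-Tg$, but prime averages of a coboundary do not telescope, and no analogous dense class is known here; this is precisely why Bourgain introduced the oscillation inequality $\sum_{j\le J}\bigl\|\sup_{N_j\le N<N_{j+1}}|A_Nf-A_{N_j}f|\bigr\|_2^2=o(J)\|f\|_2^2$, which \emph{is} a norm inequality, does transfer, and forces a.e.\ convergence without any dense class.) Your sketch mentions the dyadic square function only as a tool for the maximal inequality, so the convergence step is missing. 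Secondly, with major arcs cut off at denominators $q\le(\log N)^A$, Vinogradov's estimate gives only $|\Lambda_N(\theta)|\lesssim(\log N)^{-cA}$ on the minor arcs, not $N^{-\delta}$; power saving would require major arcs out to $q\sim N^{\varepsilon}$, where the Siegel--Walfisz input behind the $\mu(q)/\varphi(q)$ asymptotics fails. The logarithmic saving still suffices, since $\sum_k k^{-B}<\infty$ for $B>1$, but the bound you state is not available.
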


The proof is based on the Carleson transference principle to the discrete model $(\Z, \text{Shift})$, the Hardy-Littlewood circle method and estimates of prime number exponential sums.  An analogous result for polynomials instead of primes was proved by Bourgain \cite{B88,B}, see also Thouvenot \cite{Th}, with variation estimates by Krause \cite{K} showing that the averages converge rapidly. For analogous estimates for ave\-rages (\ref{eq:ave-primes}) see Zorin-Kranich \cite{ZK-primes}. Moreover, Theorem \ref{thm:bourgain} has been generalised to polynomials of primes by Wierdl \cite{W-diss} and Nair \cite{N1,N}.

Since the proof of Bourgain and Wierdl does not give any information on the set of points where the convergence holds, the following natural question arises.
%
\begin{ques}\label{ques} For which systems and functions do the ergodic averages along primes (\ref{eq:ave-primes}) converge \emph{everywhere}?
\end{ques}
%

We give a partial answer to this question and show that ergodic averages along polynomials of primes converge everywhere for all nilsystems and all continuous functions. 
For the definition of a nilsystem and a polynomial sequence see Section \ref{sec:W}.

\begin{thm}\label{thm:main}
Let $G/ \Gamma$ be a nilmanifold, $g:\N\to G$ be a polynomial sequence and $F\in C(G / \Gamma)$. Then the averages
$$
\avepN F(g(p)x)
$$
converge for every $x\in G/ \Gamma$. 
Moreover, if $G$ is connected and simply connected, $g(n)=g^n$ 
and the system $(G/ \Gamma,\mu,g)$ is ergodic, then the limit equals $\int_{G/\Gamma} F d\mu$. 
\end{thm}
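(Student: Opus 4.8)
The plan is to transfer the average over primes to a von Mangoldt weighted average, carry out the $W$-trick, and then on each admissible residue class combine the Green--Tao anti-correlation estimate for the von Mangoldt function with Leibman's theorem on equidistribution of polynomial orbits on nilmanifolds. Since $G/\Gamma$ is compact I may uniformly approximate $F\in C(G/\Gamma)$ by Lipschitz functions, with error controlled by $\|F\|_\infty$, and so reduce to Lipschitz $F$, which is the regularity in which the Green--Tao estimate is phrased. The first reduction is number-theoretic: the contribution of prime powers $p^k$, $k\ge 2$, to $\sum_{n\le N}\Lambda(n)\,F(g(n)x)$ is $O(\sqrt N\log N)$, while the logarithmic weight can be removed by Abel summation together with the prime number theorem ($\sum_{p\le N}\log p\sim N$, $\pi(N)\sim N/\log N$, $\int_2^N(\log t)^{-2}\,dt=o(N/\log N)$); consequently it suffices to prove that $\frac1N\sum_{n\le N}\Lambda(n)F(g(n)x)$ converges, and its limit will then be the limit of $\frac1{\pi(N)}\sum_{p\le N}F(g(p)x)$.

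Next I would fix $w\ge 1$, put $W=\prod_{p<w}p$, and split $\{1,\dots,N\}$ into residue classes $b\bmod W$. A class $b$ with $(b,W)>1$ carries only powers of primes below $w$ and contributes $o_N(1)$ for fixed $w$, so only the classes with $(b,W)=1$ are relevant. On such a class, substituting $n=Wm+b$ rewrites the corresponding piece as an average over $m$ of $\frac{\phi(W)}{W}\Lambda(Wm+b)\,F\big(g(Wm+b)x\big)$, where $m\mapsto g(Wm+b)$ is again a polynomial sequence. The Green--Tao anti-correlation result lets me replace the normalized weight $\frac{\phi(W)}{W}\Lambda(Wm+b)$ by its mean value $1$, at the cost of an error that is negligible as first the number of terms and then $w$ tend to infinity; and Leibman's theorem shows that $\frac1M\sum_{m\le M}F\big(g(Wm+b)x\big)$ converges as $M\to\infty$ to $\int_{Y_b}F\,d\mu_{Y_b}$, where $Y_b$ is the orbit closure of $\big(g(Wm+b)x\big)_m$ --- a closed sub-nilmanifold --- and $\mu_{Y_b}$ its Haar measure. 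Summing over the $b$ coprime to $W$ then gives
$$\limsup_{N\to\infty}\Bigl|\,\tfrac1{\pi(N)}\sum_{p\le N}F(g(p)x)-\tfrac1{\phi(W)}\sum_{(b,W)=1}\int_{Y_b}F\,d\mu_{Y_b}\,\Bigr|=o_{w\to\infty}(1).$$
Writing $L_w$ for the quantity subtracted inside the absolute value, this estimate forces $(L_w)_w$ to be Cauchy; setting $L:=\lim_w L_w$ and letting $w\to\infty$ in the displayed bound yields $\frac1{\pi(N)}\sum_{p\le N}F(g(p)x)\to L$, which is the first assertion.

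For the final statement, assume $G$ connected and simply connected, $g(n)=g^n$, and $(G/\Gamma,\mu,g)$ ergodic. Then $G/\Gamma$ is connected, so by the classical fact that an ergodic nilrotation on a connected nilmanifold is uniquely ergodic, the rotation by $g$ is uniquely ergodic; moreover ergodicity of a nilrotation is detected on the maximal torus factor, where it is clearly inherited by the $W$-th power, so the rotation by $g^W$ is ergodic --- hence uniquely ergodic --- for every $W$. Since $g(Wm+b)=g^b(g^W)^m$, unique ergodicity of $g^W$ together with left-invariance of the Haar measure $\mu$ gives $\frac1M\sum_{m\le M}F\big(g^b(g^W)^m\Gamma\big)\to\int_X F\,d\mu$ for every $b$; hence $\int_{Y_b}F\,d\mu_{Y_b}=\int_X F\,d\mu$ for all admissible $b$, so $L_w=\int_X F\,d\mu$ for every $w$ and $L=\int_X F\,d\mu$. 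Connectedness is used essentially here: on a disconnected nilmanifold an ergodic $g$ may have $g^W$ non-ergodic, and then the average along primes converges to a value other than $\int_X F\,d\mu$.

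The step I expect to be the main obstacle is the bookkeeping in the middle paragraph: combining the $w$-dependent quantitative error of the Green--Tao estimate with the merely qualitative convergence supplied by Leibman's theorem so that the iterated passage to the limit --- first in $N$, then in $w$ --- is legitimate, and arranging that the various normalizations ($\pi(N)$ versus $\phi(W)$, the length $\approx N/W$ of the inner averages, the logarithmic weight) fit together correctly. The genuinely deep analytic input --- the orthogonality of the von Mangoldt function to nilsequences --- is quoted from Green and Tao, and so is not an obstacle internal to this argument.
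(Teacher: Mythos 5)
Your proposal follows essentially the same route as the paper: transfer the prime average to a von Mangoldt--weighted average, apply the $W$-trick, use the Green--Tao anti-correlation estimate on each admissible residue class together with Leibman's pointwise convergence theorem, and, for the second assertion, combine unique ergodicity with the fact that an ergodic nilrotation on a \emph{connected} nilmanifold is totally ergodic; your device of showing $(L_w)_w$ is Cauchy is just a repackaging of the paper's Cauchy-sequence argument, and the iterated-limit bookkeeping you flag is handled there by a short diagonal argument (Corollary 2.2). The one substantive step you gloss over is that the quoted anti-correlation estimate (Green--Tao, Prop.~10.2 of \emph{Linear equations in primes}) is stated for \emph{linear} nilsequences on a \emph{connected, simply connected} group, so before applying it to $m\mapsto F(g(Wm+b)x)$ one must first rewrite the polynomial nilsequence as a linear one on a larger nilmanifold (Leibman, Chu, Green--Tao--Ziegler) and then decompose it into nilsequences coming from connected, simply connected groups (Wooley--Ziegler); the paper carries out both reductions explicitly, and without them (or a correspondingly general form of the Green--Tao input) your appeal to the black box does not quite apply as stated.
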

The key to this result is the powerful theory developed by Green and Tao \cite{GT10,GT12,GT-nil}, partially together with Ziegler \cite{GTZ}, in their study of arithmetic progressions and linear equations in the primes, in particular the asymptotic orthogonality of the modified von Mangoldt function to nilsequences, see Theorem \ref{thm:GT} below. 

Note that nilsystems and nilsequences has been playing a fundamental role in 
the study of other kinds of ergodic averages, 
namely the norm convergence of (linear and polynomial) multiple ergodic averages, motivated by Furstenberg's ergodic theoretic proof \cite{Fu} of Szemer\'edi's theorem \cite{Sz} on the existence of arithmetic progressions in large sets of integers. Here is a list of relevant works: Conze, Lesigne \cite{CL}, Furstenberg, Weiss \cite{FW}, Host, Kra \cite{HK02}, Lesigne \cite{Le}, Ziegler \cite{Z-nil}, Host, Kra \cite{HK}, Ziegler \cite{Z}, Bergelson, Host, Kra \cite{BHK}, Bergelson, Leibman, Lesigne \cite{BLL}, Bergelson, Leibman \cite{BL-gen-pol}, Leibman \cite{L-pol,L-corr}, Frantzikinakis \cite{F-corr}, Host, Kra \cite{HK09}, Chu \cite{C}, Eisner, Zorin-Kranich \cite{EZ}, Zorin-Kranich \cite{ZK}. For other applications of the Green-Tao-Ziegler theory to ergodic theorems see, e.g., Frantzikinakis, Host, Kra \cite{FHK07,FHK}, Wooley, Ziegler \cite{WZ}, Bergelson, Leibman, Ziegler \cite{BLZ}, Frantzikinakis, Host \cite{FH}. 

Our argument is similar to (but simpler than) the one in Wooley, Ziegler \cite{WZ} in the context of the norm convergence of multiple polynomial ergodic averages along primes.

\textbf{Acknowledgement.} The author thanks Vitaly Bergelson for correcting the  refe\-rences and is deeply grateful to the referee for careful reading and  suggestions which have considerably improved the paper.

\section{Preliminaries and the $W$-trick}\label{sec:W}


Let $G$ be an $s$-step Lie group and $\Gamma$ be a discrete cocompact subgroup of $G$. The homogeneous space $G/\Gamma$ together with the Haar measure $\mu$ is called an \emph{$s$-step nilmanifold}. For every $g\in G$, the left multiplication by $g$ is an invertible $\mu$-preserving transformation on $G/\Gamma$, and the triple $(G/\Gamma,\mu,g)$ is called a \emph{nilsystem}. Nilsystems enjoy remarkable algebraic and ergodic properties making them an important class of systems in the classical ergodic theory, see Auslander, Green, Hahn \cite{AGH}, Green \cite{Gr}, Parry \cite{P1,P2} and Leibman \cite{L}.  For example, single and multiple ergodic averages converge everywhere for such systems and continuous functions.

For a continuous function $F$ on $G/\Gamma$ and $x\in G/\Gamma$ the sequence $(F(g^nx))_{n\in\N}$ is called a \emph{(basic linear) nilsequence} as introduced by Bergelson, Host, Kra \cite{BHK}. A nilsequence in their definition is a uniform limit of basic nilsequences (being allowed to come from different systems and functions). Note that the property of Ces\`aro convergence along primes is preserved by uniform limits, so Theorem \ref{thm:main} implies in particular that every nilsequence is Ces\`aro convergent along primes.

Rather than linear sequences $(g^n)$, following Leibman \cite{L}, Green, Tao \cite{GT-nil} and Green, Tao, Ziegler \cite{GTZ}, we will consider polynomial sequences $(g(n))$, where $g:\N\to G$ is called a \emph{polynomial sequence} if it is of the form $g(n)=g_1^{p_1(n)}\cdot\ldots\cdot g_m^{p_m(n)}$ for some $m\in\N$, $g_1,\ldots,g_m\in G$ and some integer polynomials $p_1,\ldots,p_m$. For an abstract equivalent definition see \cite{GT-nil}. A sequence of the form $(F(g(n)x))$ for a continuous function $F$ on $G/\Gamma$ is called a \emph{polynomial nilsequence}. Although this notion seems to be more general than the one of linear basic nilsequences, it is not, see the references at the beginning of the proof of Theorem \ref{thm:main} in the following section.

Note that a nilsequence does not determine $G$, $\Gamma$, $F$ etc.~uniquely, giving room for reductions. For example, we can assume without loss of generality that $x=\id_G\Gamma$. Moreover, denoting by $G^0$ the connected component of the identity in $G$, since we are only interested in the orbit of $x$ under $g(n)$, we can assume without loss of generality that $G=\langle G^0,g_1,\ldots,g_m \rangle$.


We use the notations $o_{a,b}(1)$ and $O_{a,b}(1)$ to denote a function which converges to zero 
 or is bounded, respectively, for fixed parameters $a,b$ uniformly in all other parameters.

We now introduce the $W$-trick as in Green and Tao \cite{GT10}. Consider
$$
\Lambda'(n):=\begin{cases}
\log n &\quad \text{if } n\in\P,
\\
0 &\quad \text{otherwise}.
\end{cases}
$$
For $\omega\in\N$ define
$$
W=W_\omega:=\prod_{p\in\P,p\leq\omega}p
$$
and for $r<W$ coprime to $W$ define the modified $\Lambda'$-function by
$$
\Lambda'_{r,\omega}(n):=\frac{\phi(W)}{W}\Lambda'(Wn+r), \quad n\in\N,
$$
where $\phi$ denotes the Euler totient function.

The key to our result is the following anti-correlation property of  $\Lambda'_{r,\omega}$ with nilsequences due to Green and Tao \cite{GT10} conditional to the ``M\"obius and nilsequences conjecture'' proven by them later in \cite{GT12}. Here, $\omega:\N\to\N$ is an arbitrary function with $\lim_{N\to\infty}\omega(N)=\infty$ satisfying $\omega(N)\leq \frac{1}{2}\log\log N$ for all large $N\in\N$.
Note that the corresponding function $W:\N\to\N$ is
then $O(\log^{1/2}N)$.

\begin{thm}\label{thm:GT}\emph{(Green-Tao \cite[Prop. 10.2]{GT10})}
Let $\omega(\cdot)$ and $W(\cdot)$ be as above, $G/\Gamma$ be an s-step nilmanifold with a smooth metric, $G$ being connected and simply connected, and let  $(F(g^n x))$ be a bounded
nilsequence on $G/\Gamma$ with Lipschitz constant $M$. Then
$$
\max_{r<W(N),(r,W(N))=1} \left|\aveN (\Lambda'_{r,\omega(N)}(n)-1) F(g^n x)\right|=o_{M,G/\Gamma,s}(1)
$$
as $N\to \infty$.
\end{thm}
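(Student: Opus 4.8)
The plan is to take as the central external input the \emph{M\"obius and nilsequences} estimate of \cite{GT12} (the conjecture referenced above, now a theorem): for every nilmanifold $G/\Gamma$ with $G$ connected and simply connected and every bounded Lipschitz nilsequence $(F(g^n x))$,
$$
\aveN \mu(n)\, F(g^n x) = o_{M,G/\Gamma,s}(1).
$$
With this in hand the statement becomes an \emph{arithmetic} transfer problem rather than a dynamical one: one must pass from orthogonality of the M\"obius function $\mu$ to orthogonality of the modified von Mangoldt weight. First I would record that, thanks to the $W$-trick, the weight $\Lambda'_{r,\omega}$ has mean $1+o(1)$ uniformly over $r<W$ with $(r,W)=1$: Dirichlet's theorem on primes in arithmetic progressions, the normalisation $\phi(W)/W$, and the constraint $W(N)=O(\log^{1/2}N)$ together force $\aveN \Lambda'_{r,\omega(N)}(n)\to 1$. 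Subtracting $1$ therefore isolates the \emph{mean-zero} part of the weight, which is exactly what the M\"obius estimate will be able to control, while the restriction to the progression $Wn+r$ removes the local obstructions at small primes that would otherwise obstruct equidistribution.

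Next I would decompose the von Mangoldt function along $Wn+r$ by a Vaughan-type identity, writing $\Lambda$ as a bounded combination of \emph{Type I} sums $\sum_{d\le D} a_d \sum_{m} F(g^{dm}x)$, with $a_d$ built from $\mu$ and $\log$, and \emph{Type II} bilinear sums $\sum_{d}\sum_{m} b_d c_m F(g^{dm}x)$ with $d,m$ confined to dyadic ranges bounded away from $1$ and $N$. For the Type I sums the inner average $\sum_m F\big((g^d)^m x\big)$ is a nilsequence sampled along the progression $d\mathbb{Z}$, so the quantitative equidistribution (factorisation) theorem for polynomial orbits of \cite{GT-nil} evaluates it, up to a power-saving error, by the integral of $F$ over the subnilmanifold on which the orbit equidistributes. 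Summed against the coefficients $a_d$, these main terms reproduce precisely the contribution already present in $\aveN F(g^n x)$, so they cancel against the subtracted $1$ by the \emph{same} equidistribution input; the error terms, being summed against absolutely bounded $a_d$ over $d\le D$, are $o(1)$ once $D$ is a suitably small power of $N$.

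The heart of the argument, and the step I expect to be the main obstacle, is the Type II estimate, where the coefficients $b_d,c_m$ encode the multiplicative structure of $\Lambda$ and cannot be paired directly with the nilsequence. I would remove them by Cauchy--Schwarz in the $m$-variable, reducing to bounding
$$
\sum_{d,d'} |b_d|\,|b_{d'}| \left| \sum_{m} F(g^{dm}x)\,\overline{F(g^{d'm}x)} \right|.
$$
The inner correlation is an average of a product of two nilsequences, governed by a polynomial orbit on the product nilmanifold $(G\times G)/(\Gamma\times\Gamma)$; the equidistribution theorem makes it small unless $d$ and $d'$ are ``resonant,'' and a counting argument then controls the diagonal and resonant contributions. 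Achieving uniformity requires the factorisation theorem to peel off any non-equidistributing smooth or rational component of the orbit, whose contribution is handled either by induction on the step $s$ (reducing to lower-step nilsequences) or, in the genuinely abelian case, by exponential-sum bounds $\aveN(\Lambda'_{r,\omega}-1)\,\rme(P(n))=o(1)$ of Vinogradov type together with the $1$-step instance of the M\"obius estimate. The delicate points throughout are keeping every error uniform in the base point $x$, in the generator $g$, and above all in $r<W(N)$ as $W(N)\to\infty$, and balancing the Vaughan parameters $U,V,D$ against the power-saving in the equidistribution theorem so that the total error is genuinely $o_{M,G/\Gamma,s}(1)$.
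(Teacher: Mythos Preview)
The paper does not prove this statement at all: Theorem~\ref{thm:GT} is quoted as a black-box input, attributed to Green and Tao \cite[Prop.~10.2]{GT10} (conditional on the M\"obius--nilsequences conjecture, subsequently proven in \cite{GT12}), and is used without further argument. So there is no ``paper's own proof'' to compare your proposal against; the paper's entire treatment of this theorem is the citation itself.

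Your sketch is an attempt to reconstruct the Green--Tao argument rather than anything the present paper does. As such it is broadly in the right spirit---M\"obius orthogonality to nilsequences as the analytic input, a combinatorial identity linking $\Lambda$ to $\mu$, and quantitative equidistribution on nilmanifolds to handle the resulting sums---but it is not needed here, and in the context of this paper you should simply cite \cite{GT10,GT12} as the author does. If you do want to pursue the sketch on its own merits, be aware that the actual route in \cite{GT10} is somewhat different from a direct Vaughan decomposition of $\Lambda'_{r,\omega}-1$: Green and Tao pass through the Gowers-norm estimate for $\Lambda'_{r,\omega}-1$ and the inverse theorem, and the reduction to M\"obius uses the convolution identity $\Lambda=\mu*\log$ together with a careful truncation rather than the Type~I/Type~II split you describe; your Type~II step in particular would need substantially more than Cauchy--Schwarz plus a resonance count to yield the required uniformity in $r$ and $W(N)$.
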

An immediate corollary is the following, cf.~Theorem 2.2 (and the discussion afterwards) in Frantzikinakis, Host, Kra \cite{FHK}.  Here, $\omega$ and $W=W_\omega$ are again numbers, not functions.

\begin{corollary}\label{cor}
Let $G/\Gamma$ be an s-step nilmanifold with a smooth metric,  $G$ being connected and simply connected, and let  $(F(g^n x))$ be a bounded nilsequence on $G/\Gamma$ with  Lipschitz constant $M$. Then
$$
\lim_{\omega\to\infty}\limsup_{N\to\infty}\max_{r<W,(r,W)=1} \left|\aveN (\Lambda'_{r,\omega}(n)-1) F(g^n x)\right|=0,
$$
where 
the convergence is uniform in $F$, $g$ and $x$.
\end{corollary}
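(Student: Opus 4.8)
The plan is to derive the corollary from Theorem \ref{thm:GT} by a routine diagonalization, exploiting the freedom to choose the auxiliary function $\omega(\cdot)$. Fix a nilmanifold $G/\Gamma$ (with $G$ connected and simply connected), a nilsequence $(F(g^n x))$ with Lipschitz constant $M$, and let $\eps>0$. For each fixed $\omega\in\N$, define
$$
c(\omega):=\limsup_{N\to\infty}\ \max_{r<W_\omega,\ (r,W_\omega)=1}\left|\aveN(\Lambda'_{r,\omega}(n)-1)F(g^n x)\right|,
$$
so that the left-hand side of the asserted estimate is $\lim_{\omega\to\infty}c(\omega)$; I must show this is $o_{M,G/\Gamma,s}(1)$, i.e. bounded by a quantity depending only on the indicated data. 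The main point is that $c(\omega)$ does not itself fall under Theorem \ref{thm:GT} directly, because there $\omega=\omega(N)$ grows with $N$, whereas here $\omega$ is held fixed while $N\to\infty$.

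The key step is to produce, for any prescribed slowly growing schedule, a single function $\omega(\cdot)\to\infty$ to which Theorem \ref{thm:GT} applies and which realizes the fixed-$\omega$ averages along a subsequence. Concretely: choose any nondecreasing $\omega(\cdot):\N\to\N$ with $\omega(N)\to\infty$ and $\omega(N)\le\tfrac12\log\log N$, and for each fixed $\omega_0$ let $N_k\to\infty$ be a sequence of scales along which the $\limsup$ defining $c(\omega_0)$ is attained while simultaneously $\omega(N_k)$ is eventually constant equal to... this is the part needing care, since a fixed $\omega(\cdot)$ cannot be eventually constant. The cleaner route is therefore the reverse one: given $\delta>0$, Theorem \ref{thm:GT} furnishes $N_0=N_0(\delta,M,G/\Gamma,s)$ such that for $N\ge N_0$ and the chosen $\omega(\cdot)$ one has $\max_{r}\,|\aveN(\Lambda'_{r,\omega(N)}(n)-1)F(g^n x)|\le \delta$. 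Now for any fixed $\omega_0$, pick $N\ge N_0$ large enough that $\omega(N)=\omega_0$ is impossible for a strictly increasing $\omega(\cdot)$ — so instead one should build $\omega(\cdot)$ as a step function that takes each value $\omega_0$ on a long block of scales $[N(\omega_0),N(\omega_0+1))$, with $N(\cdot)$ increasing to infinity fast enough that $\omega(N)\le\tfrac12\log\log N$. Then for $\omega_0$ large, the block $[N(\omega_0),N(\omega_0+1))$ meets $[N_0,\infty)$, and along that block $\Lambda'_{r,\omega(N)}=\Lambda'_{r,\omega_0}$, so
$$
\max_{r<W_{\omega_0},\,(r,W_{\omega_0})=1}\ \limsup_{N\to\infty}\left|\aveN(\Lambda'_{r,\omega_0}(n)-1)F(g^n x)\right|\le\delta,
$$
using that the $\limsup$ over all $N$ of the fixed-$\omega_0$ average equals the $\limsup$ along the block (since the block is cofinal-free, one actually gets the $\limsup$ over the block, which dominates... here one invokes that the full $\limsup$ in $N$ for fixed $\omega_0$ is achieved in the tail, hence within the block once $N(\omega_0)\ge N_0$). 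Hence $c(\omega_0)\le\delta$ for all large $\omega_0$, giving $\lim_{\omega\to\infty}c(\omega)\le\delta$, and since $\delta>0$ was arbitrary (with the bound depending only on $M,G/\Gamma,s$ through $N_0$), the corollary follows.

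The main obstacle, and the place demanding genuine care, is reconciling the two roles of $\omega$: in Theorem \ref{thm:GT} it is a function of $N$ that must diverge (and stay below $\tfrac12\log\log N$), while the corollary needs control for each fixed value of $\omega$ with $N$ independently going to infinity. The resolution is the block construction above — choosing the breakpoints $N(\omega_0)$ so sparse that the constraint $\omega(N)\le\tfrac12\log\log N$ holds, yet so that each block is an infinite stretch of scales on which $\omega(\cdot)$ is genuinely constant. One should double-check that the error term $o_{M,G/\Gamma,s}(1)$ in Theorem \ref{thm:GT} is uniform over all admissible schedules $\omega(\cdot)$ (it is, since the Green--Tao estimate in \cite{GT10} depends on $\omega(\cdot)$ only through the stated growth constraints), so that the threshold $N_0$ can be taken independent of which block we are in. With that uniformity in hand, the diagonalization is routine and the stated reference to \cite[p.~5]{FHK} confirms this is the intended argument.
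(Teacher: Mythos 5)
Your overall strategy --- deducing the double-limit statement from Theorem \ref{thm:GT} by constructing a suitable admissible schedule $\omega(\cdot)$ --- is the right one and is in the same spirit as the paper's proof, but the execution has a genuine gap at exactly the step you flagged. Having built the step function with $\omega(N)=\omega_0$ on the block $[N(\omega_0),N(\omega_0+1))$, you conclude that $c(\omega_0)=\limsup_{N\to\infty}\max_r\left|\aveN(\Lambda'_{r,\omega_0}(n)-1)F(g^nx)\right|\le\delta$ because the quantity is $\le\delta$ for $N$ in the block beyond $N_0$. But each block is a \emph{bounded} interval of scales, whereas $\limsup_{N\to\infty}$ for fixed $\omega_0$ is determined by arbitrarily large $N$, far beyond the block on which $\omega(\cdot)$ equals $\omega_0$. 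There is no reason the limsup is ``achieved within the block'': controlling the averages at finitely many scales says nothing about $c(\omega_0)$. A block decomposition fixed in advance cannot work; the scales must be chosen \emph{adaptively}, using knowledge of where each $\limsup_N$ is (nearly) attained.

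That is precisely what the paper does, phrased as a contradiction. If $\lim_{\omega\to\infty}c(\omega)=0$ fails, there are $\veps>0$ and $\omega_j\to\infty$ with $c(\omega_j)>\veps$, hence scales $N_j$, which may be taken to increase as fast as one likes, with $a_{\omega_j}(N_j)>\veps$, where $a_\omega(N)$ denotes the quantity inside the limsup. Setting $\omega(N):=\omega_j$ for $N\in[N_j,N_{j+1})$ yields a single admissible schedule with $a_{\omega(N_j)}(N_j)>\veps$ for all $j$, contradicting Theorem \ref{thm:GT} applied to that one schedule. Note that this also removes your secondary worry: no uniformity of the $o(1)$ in Theorem \ref{thm:GT} over all admissible schedules $\omega(\cdot)$ is needed (and none is asserted there), since only one schedule is ever invoked. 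If you prefer a direct rather than contradiction argument, choose for each $j$ a scale $N_j\ge j$ with $a_j(N_j)>c(j)-1/j$ and run the same construction; Theorem \ref{thm:GT} then forces $c(j)\to 0$.
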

\begin{proof}We call a triple $(F,g,x)$ \emph{admissible} if $(F(g^n x))$ is a bounded nilsequence on $G/\Gamma$ with  Lipschitz constant $M$.
Define for $\omega,N\in\N$ and admissible $(F,g,x)$
$$
a_{\omega,(F,g,x)}(N):=\max_{r< W,(r,W)=1} \left|\aveN (\Lambda'_{r,\omega}(n)-1) F(g^n x)\right|
$$
and assume that the claimed uniform convergence does not hold. Then there exist $\veps>0$, a subsequence $(\omega_j)$ of $\N$ and a sequence of admissible $(F_j,g_j,x_j)$ so that
$$
\limsup_{N\to\infty} a_{\omega_j,(F_j,g_j,x_j)}(N)>\veps \quad \text{for all }j\in\N.
$$
In particular, there exists a subsequence $(N_j)$ of $\N$ such that $a_{\omega_j,(F_j,g_j,x_j)}(N_j)>\veps$ for every $j\in\N$.

Define now the function $\omega:\N\to\N$ by
$$
\omega(N):=\omega_j \quad \text{ if } N\in [N_j,N_{j+1})
$$
which grows sufficiently slowly if $(N_j)$ grows sufficiently fast.
Then we have
$$
a_{\omega(N_j),(F_j,g_j,x_j)}(N_j)=a_{\omega_j,(F_j,g_j,x_j)}(N_j)>\veps\quad \text{for all }j\in\N
$$
contradicting Theorem \ref{thm:GT} which states that 
$\lim_{N\to\infty} a_{\omega(N),(F,g,x)}(N)=0$ uniformly in admissible $(F,g,x)$.
\end{proof}


\section{Proof of Theorem \ref{thm:main}}

We first need several standard simple facts.
\begin{lemma}\label{lemma}\emph{(See, e.g., ~\cite{FHK07})}
For a bounded sequence $(a_n)\subset\C$ one has
$$
\lim_{N\to\infty}\left| \avepN a_p -\aveN \Lambda'(n)a_n\right|=0.
$$
\end{lemma}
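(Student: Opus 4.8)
The plan is to compare the prime average $\avepN a_p$ with the logarithmically weighted sum $\aveN \Lambda'(n) a_n$ directly, using only elementary prime-counting estimates. Writing $\Lambda'(n) = \log n$ on primes and $0$ elsewhere, the sum $\sum_{n=1}^N \Lambda'(n) a_n$ equals $\sum_{p \in \P, p \le N} (\log p) a_p$, so the two quantities we must compare are
$$
\frac{1}{\pi(N)}\sum_{p\in\P,p\leq N} a_p \qquad\text{and}\qquad \frac{1}{N}\sum_{p\in\P,p\leq N}(\log p)\, a_p.
$$
The idea is that by the prime number theorem $\pi(N) \sim N/\log N$, so $\frac{1}{N} \approx \frac{1}{\pi(N)\log N}$, and the weight $\log p$ is, for the bulk of primes $p \le N$, close to $\log N$. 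So heuristically the second average is $\frac{1}{\pi(N)\log N}\sum_{p \le N}(\log p) a_p \approx \frac{1}{\pi(N)}\sum_{p\le N} a_p$.

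To make this precise I would first replace $\frac1N$ by $\frac{1}{\pi(N)\log N}$ at the cost of an error $o(1)$: since $(a_n)$ is bounded, say $|a_n|\le C$, the difference between the two normalisations applied to $\sum_{p\le N}(\log p)a_p$ is at most $C\left|\frac{1}{N} - \frac{1}{\pi(N)\log N}\right|\sum_{p\le N}\log p = C\,\theta(N)\left|\frac1N - \frac{1}{\pi(N)\log N}\right|$, where $\theta(N)=\sum_{p\le N}\log p$. By Chebyshev-type bounds $\theta(N) = O(N)$ and by the prime number theorem $\pi(N)\log N \sim N$, so this error tends to $0$. It then remains to bound
$$
\frac{1}{\pi(N)}\left|\sum_{p\in\P,p\leq N}\Big(\tfrac{\log p}{\log N} - 1\Big) a_p\right| \le \frac{C}{\pi(N)}\sum_{p\in\P,p\leq N}\Big(1 - \tfrac{\log p}{\log N}\Big),
$$
using $0 \le \log p \le \log N$ for $p \le N$. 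The right-hand side is $C\big(1 - \frac{\theta(N)}{\pi(N)\log N}\big)$, which again tends to $C(1-1) = 0$ by the prime number theorem and $\theta(N)\sim N$.

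The main obstacle — if one can call it that — is simply deciding how much of the prime number theorem to invoke: the argument as sketched leans on $\pi(N)\sim N/\log N$ and $\theta(N)\sim N$ (equivalently, the PNT with $\theta$), which is entirely standard, and one could even get away with weaker Chebyshev-plus-partial-summation estimates if one wanted to avoid citing the full PNT. A cleaner packaging is to note that partial summation converts $\sum_{p\le N}a_p$ and $\sum_{p\le N}(\log p)a_p$ into integrals against $\pi(t)$ and $\int \log t\, d\pi(t)$ respectively, but for a two-to-four-paragraph proof the direct comparison above is shortest. No genuine difficulty arises because boundedness of $(a_n)$ makes every error term a pure prime-counting quantity independent of the sequence.
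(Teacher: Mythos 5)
Your proof is correct, and the paper itself gives no argument for this lemma --- it simply cites \cite{FHK07}, where the standard proof is exactly this kind of comparison via the prime number theorem (in the forms $\pi(N)\log N\sim N$ and $\theta(N)\sim N$). Your direct estimate, splitting off the renormalisation error and then bounding $1-\theta(N)/(\pi(N)\log N)$, is the expected argument and is complete.
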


\begin{lemma}\label{lemma:W-trick}
Let $(b_n)\subset \C$
satisfy $b_n=o(n)$.
Then the following assertions hold.
\begin{itemize}
\item[(a)] 
The sequence $(b_n)$ is Ces\`aro convergent if and only if for every $\veps>0$ there exist $W,N_0 \in\N $ such that 
\begin{equation}\label{eq:lemma-W}
\left| \aveWN b_n - \aveWM b_n\right|<\veps \quad \forall N,M\geq N_0.
\end{equation}
\item[(b)] If $(b_n)$ is supported on the primes, then  for every $W\in \N$
\begin{equation}\label{eq:W-trick}
\frac{1}{WN} \sum_{n=1}^{WN}b_n= \frac{1}{W} \sum_{r<W, (r,W)=1}  \aveN b_{Wn+r} +o_W(1).
\end{equation}
\end{itemize}
\end{lemma}
\begin{proof} 
(a) 
The ``only if'' implication is clear.
To show the ``if'' implication, let $\veps>0$ and take $W,N_0$  satisfying (\ref{eq:lemma-W}). Let further $N_1\in\N$ be such that $|b_n|<\frac{\veps n}W$ holds for every $n\geq N_1$.
We can assume without loss of generality that $WN_0\geq N_1$.
For $k\geq WN_0$ let $N=N(k)\geq N_0$ be such that $k\in [WN,W(N+1))$. By the triangle inequality it suffices to show 
\begin{equation}\label{eq:lemma-W-2}
\left| \avek b_n-\aveWN b_n \right|<4\veps
\end{equation}
if $k$ is large enough.

We first observe that 
\begin{eqnarray*}
\left| \avek b_n-\frac1{WN}\sum_{n=1}^k b_n \right|&=&
\frac{k-WN}{kWN} \sum_{n=1}^k|b_n|
\leq \frac{W}{k(k-W)} \left(O(1)+\sum_{n=N_1}^k \frac{k\veps}W\right)\\
&\leq& o(1) + \frac{k}{k-W}\veps 
<2\veps
\end{eqnarray*}
for large enough $k$.
On the other hand, 
\begin{eqnarray*}
\left| \frac1{WN}\sum_{n=1}^k b_n- \aveWN b_n\right|\leq \frac1{WN}\sum_{n=WN+1}^k \frac{n\veps}W
\leq \frac{k}{k-W} \veps <2\veps
\end{eqnarray*}
for large enough $k$, proving 
(\ref{eq:lemma-W-2}).

(b) The growth condition implies
\begin{eqnarray*}
\frac{1}{WN} \sum_{n=1}^{WN} b_n
= \frac{1}{WN} \sum_{r=1}^W \sum_{n=0}^{N-1} b_{Wn+r}
=\frac{1}{W} \sum_{r=1}^W \aveN b_{Wn+r} + o_W(1).
\end{eqnarray*}
If $(b_n)$ is supported on the primes, (\ref{eq:W-trick}) follows.
\end{proof}

The following property of connected nilsystems is well known.

\begin{lemma}\label{lemma:ergodic}
Let $X:=G/\Gamma$ be a connected nilsystem with Haar measure $\mu$ and $g\in G$. Then $(X,\mu,g)$ is ergodic if and only if $(X,\mu,g)$ is totally ergodic.
\end{lemma}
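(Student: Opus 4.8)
\emph{Proof idea.} One implication is for free: total ergodicity includes ergodicity of the first power, so I only need that ergodicity of $(X,\mu,g)$ forces ergodicity of $(X,\mu,g^k)$ for every $k\ge 1$. The plan is to transfer the question to the maximal torus factor, where it becomes elementary.

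The first step is to invoke the classical structure result for connected nilsystems (Green \cite{Gr}, Parry \cite{P1,P2}, Leibman \cite{L}): since $G$ is connected, $\overline{[G,G]}$ is a closed connected normal subgroup, $\overline{[G,G]}\Gamma$ is closed, and $Z:=G/\overline{[G,G]}\Gamma$ is a compact connected abelian group; a translation $h\in G$ induces a rotation $R_{\bar h}$ on $Z$, and $(X,\mu,h)$ is ergodic if and only if $(Z,R_{\bar h})$ is ergodic. Because $G$ is connected we have $G^0=G$, so $\langle G^0,g^k\rangle=G$ automatically for every $k$ and this criterion applies verbatim to $g$ and to all of its powers: writing $b$ for the image of $g$ in $Z$, the system $(X,\mu,g)$ is ergodic iff $R_b$ is ergodic on $Z$, and $(X,\mu,g^k)$ is ergodic iff $R_b^{\,k}=R_{b^k}$ is ergodic on $Z$.

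It then remains to observe that an ergodic rotation on a connected compact abelian group is totally ergodic, which is routine. Ergodicity of $R_b$ means that $\langle b\rangle$ is dense in $Z$; the closure $\overline{\langle b^k\rangle}$ is then a closed subgroup of $Z$ of index at most $k$, hence open, hence — $Z$ being connected — all of $Z$, so $R_{b^k}$ is ergodic as well. (Equivalently, one can argue through characters, using that $\widehat{Z}$ is torsion-free: if $\chi(b)^k=1$ then $\chi^k$ kills $b$, so $\chi^k$ is trivial by ergodicity of $R_b$, so $\chi$ is trivial.) Chaining this with the equivalences of the previous paragraph gives the lemma. I expect the only genuine input to be the reduction of ergodicity of a connected nilsystem to that of its torus factor; the remaining arguments are elementary. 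The point worth flagging is that connectedness of $G$ is used twice and in an essential way: it makes the generation hypothesis in the reduction free for every power $g^k$, and it is precisely what makes ``ergodic $\Rightarrow$ totally ergodic'' hold for the factor rotation (for a disconnected group rotation, say an irrational rotation times a finite cyclic rotation, the statement is false).
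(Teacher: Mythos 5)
Your proof is correct and follows essentially the same route as the paper: reduce to the maximal torus factor $G/(\overline{[G,G]}\Gamma)$, which is a connected compact abelian group, and then show that an ergodic rotation there is totally ergodic using connectedness. The only (cosmetic) difference is that the paper carries out the last step via an explicit Fourier-series computation with a clopen-decomposition argument, whereas you use the subgroup-index/torsion-free-dual argument; you are also slightly more explicit than the paper about why the ergodicity criterion transfers to every power $g^k$, which is a point worth making.
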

\begin{proof}
Since ergodicity of a nilsystem is equivalent to ergodicity of its Kronecker factor (also called maximal factor-torus, or ``horizontal'' torus) $G/([G,G]\Gamma)$, see Leibman \cite{L}, we can assume without loss of generality  that $X$ is a compact connected abelian group.

Let $(X,\mu,g)$ be ergodic, $m\in \N$ and let $F\in L^2(X,\mu)$ be an  $g^m$-invariant function, i.e., $F(g^mx)=F(x)$ for every $x\in X$.  Consider the Fourier decomposition
$$
F=\sum_{\chi\in \hat{X}} c_\chi \chi.
$$
By the assumption we have
$$
F=\sum_{\chi\in \hat{X}} c_\chi (\chi(g))^m \chi.
$$
By the uniqueness of the decomposition we obtain
$$
c_\chi = c_\chi (\chi(g))^m \quad \forall \chi\in \hat{X}.
$$
Assume that $c_\chi\neq 0$. Then $(\chi(g))^m=1$, i.e.,  $\chi(g)$ is an $m^{\text{th}}$ root of unity. Since $(X,\mu,g)$ is ergodic,  $\{g^n: n\in \Z\}$ is dense in $X$. Since $\chi$ is a character and  $X$ is connected, $\chi(g)$ has to be equal to $1$ - otherwise $X$  would have two clopen components $\overline{\{g^{n}:\, m_0|n\}}$ and  $\overline{\{g^{n}:\, m_0\nmid n\}}$, where $m_0$ is the smallest  period of $\chi(g)$. Thus $F=c_11$ and $(X,\mu,g)$ is totally ergodic.
\end{proof}

\begin{proof}[Proof of Theorem \ref{thm:main}]

As mentioned above, we can assume that $x=\id_G\Gamma\in G^0$, where $G^0$ is the connected component of the identity in $G$, and $G=\langle G^0,g_1,\ldots,g_m\rangle$.

Every polynomial nilsequence can be represented as a linear nilsequence on a larger nilmanifold, see Leibman \cite[Prop. 3.14]{L}, Chu \cite[Prop. 2.1 and its proof]{C} and, in the context of connected groups, Green, Tao, Ziegler \cite[Prop. C.2]{GTZ}. Thus we can assume that $g(n)=g^n$ for some $g\in G$.

By the argument in Wooley, Ziegler \cite[text between Cor. 3.14 and Prop. 3.15]{WZ}, the nilsequence $(F(g^nx))$ can be written as a finite sum of (linear) nilsequences coming from a connected, simply connected Lie group.
Thus we can assume without loss of generality that $G$ is connected and simply connected.

We first assume that $F$ is Lipschitz and define $b_n:=\Lambda'(n) F(g^nx)$. To show convergence of
\begin{equation}\label{eq:ave-p}
\avepN F(g^px),
\end{equation}
by 
Lemma \ref{lemma} it is enough to show that $(b_n)$ satisfies the condition in Lemma \ref{lemma:W-trick}(a).

For every $\omega\in\N$ we have by Lemma \ref{lemma:W-trick}(b)
\begin{eqnarray}
\frac{1}{WN} \sum_{n=1}^{WN} b_n &=& \frac{1}{W} \sum_{r<W, (r,W)=1}  \aveN b_{Wn+r} + o_W(1)\nonumber \\
&=& \frac{1}{\phi(W)} \sum_{r<W, (r,W)=1} \aveN  \Lambda'_{r,\omega}(n) F(g^{Wn+r}x)+ o_W(1) \nonumber \\
&=& \frac{1}{\phi(W)} \sum_{r<W, (r,W)=1} \aveN  (\Lambda'_{r,\omega}(n)-1) F(g^{Wn+r}x) \nonumber \\
&\ & + \frac{1}{\phi(W)} \sum_{r<W, (r,W)=1}  \aveN  F(g^{Wn+r}x)+ o_W(1) \label{eq:proof}\\
&=:& I(N)+II(N)+o_W(1).\nonumber
\end{eqnarray}
Let $\veps>0$ and take a large $\omega$ such that $\limsup_{N\to\infty}|I(N)|<\veps$ which exists by Corollary \ref{cor}. Since the sequence $(F(g^{Wn+r}x))_{n\in\N}$ is Ces\`aro convergent for every $r$, see  Leibman \cite{L} and Parry \cite{P1,P2}, there is $N_0\in\N$ such that $|II(N)-II(M)|<\veps$ for every $N,M\geq N_0$. 
Thus $(b_n)$ satisfies the condition in Lemma \ref{lemma:W-trick}(a).

Take now $F \in C(G/\Gamma)$ arbitrary, $x\in G/\Gamma$ and $\veps> 0$. By the uniform continuity of $F$ there exists $G\in C(G/\Gamma)$ Lipschitz with $\|F-G\|_\infty\leq  \veps$.
We then have
\begin{eqnarray*}
&\ &\left|\avepN F(g^p x)-\avepM F(g^px)\right|\\
&\leq& \left|\avepN F(g^px)-\avepN G(g^px)\right|  \\
&\ & +\left|\avepN G(g^px)-\avepM G(g^px)\right| \\
&\ & + \left|\avepM G(g^px)-\avepM F(g^px)\right| \\
&\leq& 2\veps + \left|\avepN G(g^px)-\avepM G(g^px)\right|
\end{eqnarray*}
which is less than $3\veps$ for large enough $N,M$ by the above, finishing the argument.

The last assertion of the theorem follows analogously from the decomposition (\ref{eq:proof})  using  Lemma \ref{lemma:ergodic}, the fact that a nilsystem is ergodic if and only if it is uniquely ergodic, see Parry \cite{P1,P2}, and the uniform convergence of Birkhoff's ergodic averages to the space mean for uniquely ergodic systems. The last step (for non-Lipschitz functions) should be modified by showing that the difference $\avepN F(g^px)-\aveN F(g^nx)$ converges to zero.
\end{proof}


\begin{thebibliography}{10}


\bibitem{AGH} L.~Auslander, L.~Green, F.~Hahn, Flows on Homogeneous Spaces. Ann. Math. Studies 53, Princeton Univ. Press, 1963.


\bibitem{BHK} V.~Bergelson, B.~Host, B.~Kra, \emph{Multiple recurrence and nilsequences}, Invent. Math. \textbf{160} (2005),  261--303.

\bibitem{BL-gen-pol} V.~Bergelson, A.~Leibman, \emph{Distribution of values of bounded generalized polynomials}, Acta Math. \textbf{198} (2007), 155--230.

\bibitem{BLL} V.~Bergelson, A.~Leibman, E~Lesigne, \emph{Intersective polynomials and the polynomial Szemer\'edi theorem}, Adv. Math. \textbf{219} (2008), 369--388.

\bibitem{BLZ} V.~Bergelson, A.~Leibman, T.~Ziegler, \emph{The shifted  primes and the multidimensional Szemer\'edi and polynomial van der  Waerden theorems}, C.~R. Math. Acad. Sci. Paris \textbf{349} (2011),  123--125.

\bibitem{B86} J.~Bourgain, \emph{An approach to pointwise ergodic theorems}, Geometric aspects of functional analysis (1986/87), 204--223,
Lecture Notes in Math., 1317, Springer, Berlin, 1988.  

\bibitem{B88} J.~Bourgain, \emph{On the pointwise ergodic theorem on $L^p$ for arithmetic sets}, Israel J. Math. \textbf{61} (1988), 73--84.

\bibitem{B} J.~Bourgain, \emph{Pointwise ergodic theorems for  arithmetic sets}, Publ. I.H.E.S. \textbf{169} (1989), 5--45.

\bibitem{C}
Q.~Chu, \emph{Convergence of weighted polynomial multiple ergodic
averages}, Proc. Amer. Math. Soc. \textbf{137} (2009), 1363--1369.


\bibitem{CL}
J.P.~Conze, E.~Lesigne \emph{Sur un th\'eor\`eme ergodique pour des mesures diagonales}, C. R. Acad. Sci. Paris S\'er. I Math.  \textbf{306} (1988), 491--493.


\bibitem{EZ} T.~Eisner, P.~Zorin-Kranich, \emph{Uniformity in the Wiener-Wintner theorem for nilsequences}, Discrete Cont. Dyn. Syst. \textbf{33} (2013), 3497--3516.


\bibitem{FW} H.~Furstenberg, B.~Weiss, \emph{A mean ergodic theorem for $\aveN f(T^nx)g(T^{n^2}x)$}, Convergence in ergodic theory and probability, Ohio State Univ. Math. Res. Inst. Publ. 5 (1996), de Guyter, Berlin, 193--227.

\bibitem{F-corr} N.~Frantzikinakis, \emph{Multiple correlation sequences and nilsequences}, Invent. Math. \textbf{202} (2015), 875--892.

\bibitem{FH} N.~Frantzikinakis, B.~Host, \emph{Weighted multiple ergodic averages and correlation sequences}, preprint, 2015, available at
http://arxiv.org/pdf/1511.05945.pdf.

\bibitem{FHK07} N.~Frantzikinakis, B.~Host, B.~Kra, \emph{Multiple  recurrence and convergence for sequences related to the prime  numbers}, J. Reine Angew. Math. \textbf{611} (2007), 131--144.


\bibitem{FHK} N.~Frantzikinakis, B.~Host, B.~Kra, \emph{The polynomial  multidimensional Szemer\'edi theorem along shifted primes}, Istael J.  Math. \textbf{194} (2013), 331--348.

\bibitem{Fu} H.~Furstenberg, \emph{Ergodic behavior of diagonal measures and a theorem of Szemer\'edi on arithmetic progressions},
 J. Analyse Math.  \textbf{31} (1977), 204--256.

\bibitem{G} T.~Gowers, \emph{A new proof of Szemer\'edi's theorem}, Geom. Funct. Anal. \textbf{11} (2001), 465--588.

\bibitem{Gr} L.W.~Green, \emph{Spectra of nilmanifolds}, Bull. Amer. Mth. Soc. \textbf{67} (1961), 414--415.


\bibitem{GT-nil} B.~Green, T.~Tao, \emph{The quantitative behaviour of
polynomial orbits on nilmanifolds}, Ann. of Math. (2)  \textbf{175}  (2012), 465--540.

\bibitem{GT10} B.~Green, T.~Tao, \emph{Linear equations in primes},  Ann. of Math. (2)  \textbf{171} (2010), 1753--1850.


\bibitem{GT12} B.~Green, T.~Tao, \emph{The M\"obius function is strongly orthogonal to nilsequences}, Ann. of Math. (2) \textbf{175} (2012), 541--566.

\bibitem{GTZ} B.~Green, T.~Tao, T,~ Ziegler, \emph{An inverse theorem  for the Gowers
$U^{s+1}$-norm}, Ann. of Math. (2)  \textbf{176} (2012), 1231--1372.

\bibitem{HK02} B.~Host, B.~Kra, \emph{An odd Firstenberg-Szemer\'edi theorem and affine systems}, J. Anal. Math. \textbf{86} (2002), 183--220.

\bibitem{HK} B.~Host, B.~Kra, \emph{Nonconventional ergodic averages and nilmanifolds}, Ann. of Math. (2) \textbf{161} (2005), 397--488.

\bibitem{HK09} B.~Host, B.~Kra, \emph{Uniformity seminorms on $l^\infty$ and applications}, J. Anal. Math. \textbf{108} (2009), 219--276.

\bibitem{K} B.~Krause, \emph{Polynomial Ergodic Averages Converge Rapidly: Variations on a Theorem of Bourgain}, preprint, 2014, availble at http://arxiv.org/abs/1402.1803.

\bibitem{L} A. Leibman, \emph{Pointwise convergence of ergodic  averages for polynomial sequences of translations on a nilmanifold}, Ergodic Theory Dynam. Systems \textbf{25} (2005), 201--213.


\bibitem{L-pol} A.~Leibman, \emph{Convergence of multiple ergodic averages along polynomials of several variables}, Israel J. Math. \textbf{146} (2005), 303--315.

\bibitem{L-corr} A.~Leibman, \emph{Nilsequences, null-sequences, and multiple correlation sequences}, Erg. Th. Dynam. Sys. \textbf{35} (2015), 176--191.

\bibitem{Le} E.~Lesigne, \emph{Sur une nil-vari\'et\'e, les parties minimales associ\'ees \`a une translation sont uniquement ergodiques}, Erg. Th. Dynam. Sys. \textbf{11} (1991), 379--391.

\bibitem{N1} R.~Nair, \emph{On polynomials in primes and Bourgain's  circle method approach to ergodic theorems},  Ergodic Theory Dynam. Systems \textbf{11} (1991), 485--499. 

\bibitem{N} R.~Nair, \emph{On polynomials in primes and Bourgain's  circle method approach to ergodic theorems II}, Studia Math.  \textbf{105} (1993), 207--233.

\bibitem{P1} W.~Parry, \emph{Ergodic properties of affine transformations and flows on nilmanifolds}, Amer. J. Math. \textbf{91} (1969), 757--771.

\bibitem{P2} W.~Parry, \emph{Dynamical systems on nilmanifolds}, Bull. London Math. Soc. \textbf{2} (1970), 37--40.

\bibitem{Sz} E.~Szemer\'edi,  \emph{On sets of integers containing no k elements in arithmetic progression},
Collection of articles in memory of Juri{\u\i}
              Vladimirovi{\v{c}}   Linnik. Acta Arith. \textbf{27} (1975), 199--245. 

\bibitem{Th} J.-P.~Thouvenot,
\emph{La convergence presque s\^ure des moyennes ergodiques suivant certaines
sous-suites d'entiers (d'apr\`es Jean Bourgain)} (French) [Almost sure
convergence of ergodic means along some subsequences of integers
(after Jean Bourgain)]
S\'eminaire Bourbaki, Vol. 1989/90.
Ast\'erisque No. 189-190 (1990), Exp. No. 719, 133--153.

\bibitem{W} M.~Wierdl, \emph{Pointwise ergodic theorem along the prime  numbers}, Israel J. Math. \textbf{64} (1988), 315--336.

\bibitem{W-diss} M.~Wierdl, \emph{Almost everywhere convergence and recurrence along subsequences in ergodic theory}, PhD thesis,  Ohio State University, 1989.

\bibitem{WZ} T.~Wooley, T.~Ziegler, \emph{Multiple recurrence and  convergence along the primes}, Amer. J. Math. \textbf{134} (2012),  1705--1732.


\bibitem{Z-nil} T.~Ziegler, \emph{A non-conventional ergodic theorem for a nilsystem}, Erg. Th. Dynam. Sys. \textbf{25} (2005), 1357--1370.


\bibitem{Z} T.~Ziegler, \emph{Universal characteristic factors and Furstenberg averages}, J. Amer. Math. Soc. \textbf{20} (2007), 53--97.

\bibitem{ZK-primes} P.~Zorin-Kranich, \emph{Variation estimates for averages along primes and polynomials}, J. Funct. Anal. \textbf{268}  (2015), 210--238.

\bibitem{ZK} P.~Zorin-Kranich, \emph{A double return times theorem}, preprint, 2015, available at
http://arxiv.org/pdf/1506.05748v1.pdf.

\end{thebibliography}
\end{document}